\numberwithin{equation}{section}
\theoremstyle{plain}
\newtheorem{theorem}{Theorem}[section]
\newtheorem*{cor-acyclic}{Corollary~\ref{cor:acyclic}}
\newtheorem{proposition}[theorem]{Proposition}
\newtheorem{lemma}[theorem]{Lemma}
\newtheorem{corollary}[theorem]{Corollary}
\newtheorem{claim}[theorem]{Claim}
\newtheorem{fact}[theorem]{Fact}
\newtheorem{prob}[theorem]{Problem}
\theoremstyle{definition}
\theoremstyle{plain}
\def\cH{\mathcal{H}}
\begin{document}
\title{Characterization of forbidden subgraphs for bounded star chromatic number}

\author{
Ilkyoo Choi\thanks{
Department of Mathematics, Hankuk University of Foreign Studies, Yongin-si, Gyeonggi-do, Republic of Korea.
E-mail: \texttt{ilkyoo@hufs.ac.kr}
}
\and
Ringi Kim\thanks{
Corresponding author. Department of Mathematical Sciences, KAIST, Daejeon, Republic of Korea.
E-mail: \texttt{ringikim2@gmail.com}
}
\and
Boram Park\thanks{
Department of Mathematics, Ajou University, Suwon-si, Gyeonggi-do, Republic of Korea.
E-mail: \texttt{borampark@ajou.ac.kr}
}
}

\date\today

\maketitle

\begin{abstract}
The {\it chromatic number} of a graph is the minimum $k$ such that the graph has a proper $k$-coloring.
There are many coloring parameters in the literature that are proper colorings that also forbid bicolored subgraphs.
Some examples are $2$-distance coloring, acyclic coloring, and star coloring, which forbid a bicolored path on three vertices, bicolored cycles, and a bicolored path on four vertices, respectively.
This notion was first suggested by Gr\"unbaum in 1973, but no specific name was given.
We revive this notion by defining an {\it $H$-avoiding $k$-coloring} to be a proper $k$-coloring that forbids a bicolored subgraph $H$.

When considering the class $\mathcal C$ of graphs with no $F$ as an induced subgraph, it is not hard to see that every graph in $\mathcal C$ has bounded chromatic number if and only if $F$ is a complete graph of size at most two.
We study this phenomena for the class of graphs with no $F$ as a subgraph for $H$-avoiding coloring.
We completely characterize all graphs $F$ where the class of graphs with no $F$ as a subgraph has bounded $H$-avoiding chromatic number for a large class of graphs $H$.
As a corollary, our main result implies a characterization of graphs $F$ where the class of graphs with no $F$ as a subgraph has bounded star chromatic number.
We also obtain a complete characterization for the acyclic chromatic number.
\end{abstract}

\section{Introduction}\label{sec-intro}
All graphs in this paper are simple and have no loops.
Given a graph $G$, a {\it proper $k$-coloring} of $G$ is a partition of its vertex set $V(G)$ into $k$ parts such that there is no edge with both endpoints in the same part.
In other words, there exists a function $\varphi$ on $V(G)$ to a set of $k$ colors such that $\varphi(x)\neq \varphi(y)$ for each edge $xy$ of $G$.
The {\it chromatic number} of $G$, denoted $\chi(G)$, is the minimum $k$ such that $G$ has a proper $k$-coloring.

Since a proper coloring of a graph $G$ is a partition of its vertex set, there is an intimate relation between the structure of $G$ and its chromatic number $\chi(G)$.
Since a complete graph requires all of its vertices to receive different colors, the size of a largest clique of a graph $G$, the clique number of $G$, is a trivial lower bound on $\chi(G)$.
There was great interest in characterizing graphs where the aforementioned trivial lower bound is also an upper bound for the chromatic number;
these graphs are also known as {\it perfect graphs}.
Conjectured by Berge~\cite{1961Be} in 1961, the characterization of perfect graphs in terms of forbidden induced subgraphs was completed by Chudnovsky et al.~\cite{2006ChRoSeTh}.
It is now known as the Strong Perfect Graph Theorem, which states that a graph is a perfect graph if and only if it contains neither an odd cycle nor the complement of an odd cycle as an induced subgraph.

As a generalization of perfect graphs, researchers started investigating which classes of graphs with a forbidden structure are {\it $\chi$-bounded}; a class $\mathcal C$ of graphs is $\chi$-bounded if there exists a function such that the chromatic number of each graph $G$ in $\mathcal C$ is bounded by the same function of the clique number of $G$.
By the random construction of Erd\H{o}s~\cite{1959Er}, for each $k$ and $g$, we know that there exists a graph $G$ with $\chi(G)\geq k$ and minimum cycle length at least $g$.
This implies that for the class of graphs without $F$ as an induced subgraph to be $\chi$-bounded, it is necessary that $F$ is a forest.
Gy\'arf\'as~\cite{1975Gy} and Sumner~\cite{1981Su} independently conjectured that the necessary condition is also sufficient, namely, for a forest $F$ and an integer $q$, there is a function $f(F, q)$ such that the chromatic number of a graph that contains neither $F$ as an induced subgraph nor a clique of size $q$ is bounded by $f(F, q)$.
This conjecture is known to be true for paths~\cite{1985Gy}, stars, and some additional forests~\cite{1980Gy,1994Ki,2004Ki,1997Sc,2017Ch}, but is mostly wide open.
We redirect the readers to~\cite{2016ChScSe,2016ScSe} for more information regarding recent progress on various conjectures regarding $\chi$-bounded graphs.

A natural question to ask is changing the above question from forbidding two structures (a graph $F$ and a clique of size $q$) to a single structure.
When the structure is defined to be an induced subgraph, since $F$ must be both a complete graph and a forest, we obtain the following fact:

\begin{fact}\label{fact-induced}
The class of graphs without $F$ as an induced subgraph is $\chi$-bounded if and only if $F$ is a complete graph on at most two vertices.
\end{fact}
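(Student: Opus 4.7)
The plan is to prove the equivalence in both directions, with the reverse implication essentially by inspection and the forward implication, argued by contrapositive, reduced to exhibiting for each ``bad'' $F$ a family of graphs in the class with unbounded chromatic number.

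For the reverse direction, I would simply observe that if $F = K_1$ then the class consists only of the empty graph (every nonempty graph contains a vertex), and if $F = K_2$ then the class consists of the edgeless graphs; in either case every member has chromatic number at most $1$, so the class is trivially $\chi$-bounded.

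For the forward direction, assume $F \notin \{K_1, K_2\}$ and split into two exhaustive cases. In the first case, $F$ is not a complete graph; then $F$ contains a non-edge, so no clique $K_n$ can contain $F$ as an induced subgraph (every induced subgraph of $K_n$ is itself complete), and the family $\{K_n\}_{n \geq 1}$ therefore lies in the class and has $\chi(K_n) = n \to \infty$. In the second case, $F$ is a complete graph on at least three vertices; then $F$ contains a triangle, so every triangle-free graph avoids $F$ (even as an ordinary subgraph) and hence lies in the class. Invoking the Erd\H{o}s probabilistic construction already cited in the preceding paragraph, which yields triangle-free graphs of arbitrarily large chromatic number, we again obtain members of the class with unbounded chromatic number.

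There is no genuine obstacle. The only external input is Erd\H{o}s' existence of triangle-free graphs with arbitrarily large chromatic number, which is already cited in the introduction. The one bit of care is verifying that the case split exhausts $F \notin \{K_1, K_2\}$: any $F$ is either non-complete (Case~1) or complete, and the complete graphs on at most two vertices are exactly $K_1$ and $K_2$, so any remaining complete $F$ falls under Case~2.
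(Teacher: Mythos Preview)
Your proof is correct and matches the paper's reasoning. The paper does not give a formal proof of this fact but only the one-line justification ``since $F$ must be both a complete graph and a forest,'' invoking the Erd\H{o}s construction for the forest condition and leaving the complete-graph condition implicit; your two cases make both halves explicit, with the minor cosmetic difference that you specialize the Erd\H{o}s result to triangle-free graphs rather than invoking the full large-girth version.
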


We are interested in when the forbidden structure is a subgraph, which is not necessarily induced.
We define an {\it $F$-free graph} to be a graph that does not contain $F$ as a subgraph.
As in the case of induced subgraphs, in order for the class of $F$-free graphs to be $\chi$-bounded, it must be the case that $F$ is a forest by the previously mentioned construction by Erd\H{o}s.
However, since a graph with minimum degree at least $k-1$ contains all trees of order $k$, we know that for a tree $F$, an $F$-free graph must contain a vertex of degree at most $k-2$.
This further implies that $G$ is $(k-2)$-degenerate, and an ordering of the vertices exists such that each vertex has at most $k-2$ later neighbors.
Since we can greedily color the vertices in the reverse order with $k-1$ colors, we know that $G$ is $(k-1)$-colorable.
Therefore, we conclude the following:

\begin{fact}\label{fact-subgraph}
The class of $F$-free graphs is $\chi$-bounded if and only if $F$ is a forest.
\end{fact}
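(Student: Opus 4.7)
The plan is to prove both directions of the equivalence. For the forward (necessity) direction, I would assume $F$ is not a forest, so $F$ contains a cycle, say of length $g$. By the Erd\H{o}s construction already invoked above (\cite{1959Er}), for every positive integer $k$ there exists a graph $G_k$ with $\chi(G_k) \geq k$ and girth strictly greater than $g$. Such a $G_k$ cannot contain $F$ as a subgraph, and being triangle-free it has clique number at most $2$. The family $\{G_k\}$ is therefore a sequence of $F$-free graphs with bounded clique number but unbounded chromatic number, witnessing that the class of $F$-free graphs is not $\chi$-bounded.

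For the backward (sufficiency) direction, I would assume $F$ is a forest on $k$ vertices. The first step is a reduction to trees: let $T$ be any tree on the same vertex set as $F$, obtained by adding edges that join the components of $F$, so that $F$ is a subgraph of $T$. Any graph that contains $T$ automatically contains $F$, so the class of $F$-free graphs is contained in the class of $T$-free graphs, and it suffices to bound the chromatic number of the latter. The second step is the standard lemma that every graph with minimum degree at least $k-1$ contains every tree on $k$ vertices as a subgraph; this is proved by a short induction that removes a leaf of the tree, embeds the remaining $(k-1)$-vertex tree by the inductive hypothesis, and uses the minimum-degree condition to find an unused neighbor of the image of the leaf's parent at which to attach the leaf. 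Applied not only to $G$ itself but to every subgraph of a $T$-free graph $G$, this yields that $G$ is $(k-2)$-degenerate, and greedy coloring in reverse degeneracy order produces a proper coloring with $k-1$ colors. Hence every $F$-free graph has chromatic number at most $k-1$, a bound depending only on $F$, and in particular $\chi$-boundedness holds in the strong form of a constant bound independent of the clique number.

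The main obstacle here is not severe, as the statement is labeled a \emph{fact} and rests on two textbook ingredients that the authors have essentially already sketched in the paragraph preceding it. The only care needed is (i) the reduction from forests to trees, which is immediate once one observes that $F \subseteq T$ on the same vertex set so that $F$-freeness follows from $T$-freeness, and (ii) the application of the minimum-degree lemma not only to $G$ but to every subgraph of $G$ in order to extract degeneracy rather than merely a single low-degree vertex, which is what enables the greedy coloring argument to go through.
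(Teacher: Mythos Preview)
Your proposal is correct and follows essentially the same approach the paper sketches in the paragraph preceding the fact: Erd\H{o}s' high-girth construction for necessity, and the minimum-degree lemma yielding $(k-2)$-degeneracy together with greedy coloring for sufficiency. Your explicit reduction from forests to a containing tree on the same vertex set and your remark that the minimum-degree lemma must be applied to every subgraph to extract degeneracy are useful clarifications that the paper leaves implicit.
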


\subsection{Our contributions}\label{subsec:intro}

In this paper, we are interested in obtaining characterizations similar to Fact~\ref{fact-induced} for various other coloring parameters.
Instead of dealing with each coloring parameter separately, we define a generalized coloring parameter that captures the constraints of several well-known coloring parameters in the literature such as $2$-distance coloring, acyclic coloring, and star coloring.
All these coloring parameters are proper colorings that also forbid bicolored subgraphs;
in addition to being a proper coloring, $2$-distance coloring, acyclic coloring, and star coloring forbids a bicolored path on three vertices, bicolored cycles, and a bicolored path on four vertices, respectively.
In particular, acyclic coloring and star coloring have been an active area of research.
Yet, as far as we know, there has been no attempt in trying to characterize the forbidden structures to obtain bounded acyclic chromatic number or star chromatic number.
We direct the readers to the well-known graph coloring book~\cite{1995JeTo} of Jensen and Toft for more information regarding various coloring parameters.

This notion of proper colorings that also forbid bicolored subgraphs was suggested by Gr\"unbaum~\cite{1973Gr} in 1973, but no specific name was given.
We revive this notion by introducing the concept of {\it $H$-avoiding $k$-coloring}.
To be precise, given a graph $G$ and a (smaller) connected graph $H$, an {\it $H$-avoiding $k$-coloring} of $G$ is a proper $k$-coloring with the additional constraint that there is no bicolored $H$.
For example, a $P_3$-avoiding coloring is exactly the case of $2$-distance coloring, which is equivalent to proper coloring the square of the graph, and a $P_4$-avoiding coloring is exactly the case of star coloring. (Here, we denote by $P_n$ the path graph on $n$ vertices.)
Note that if $H$ is not $2$-colorable, then $H$-avoiding coloring is the same as proper coloring.

We say a class $\mathcal C$ of graphs has bounded $H$-avoiding chromatic number if there is a function $f(H)$ such that each graph in $\mathcal C$ has an $H$-avoiding $f(H)$-coloring.
The purpose of this paper is to investigate the phenomena in Fact~\ref{fact-induced} and~\ref{fact-subgraph} for $H$-avoiding coloring.
Namely, we initiate the study of characterizing all graphs $F$ for a given $H$ where the class of $F$-free graphs has bounded $H$-avoiding chromatic number.
We start with the following simple result regarding stars:

\begin{theorem}
Let $H$ be a star.
The class of $F$-free graphs has bounded $H$-avoiding chromatic number if and only if $F$ is a star.
\end{theorem}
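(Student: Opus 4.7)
The plan is to treat the two directions of the equivalence separately. Write $H = K_{1,t}$ with $t \ge 2$ (the case $t = 1$ is degenerate, as a proper $K_2$-avoiding coloring is impossible on any graph with an edge). The crucial reformulation is that an $H$-avoiding proper coloring is exactly a proper coloring in which no vertex has $t$ or more same-colored neighbors.

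For the ``if'' direction, suppose $F = K_{1,s}$, so every $F$-free graph $G$ has $\Delta(G) \le s-1$. I would pass to the square $G^2$, bound $\Delta(G^2) \le (s-1)^2$, and produce a proper $((s-1)^2 + 1)$-coloring of $G^2$ by a greedy argument. Any proper coloring of $G^2$ is a $P_3$-avoiding coloring of $G$; since any bicolored $K_{1,t}$ with $t \ge 2$ contains a bicolored $P_3 = K_{1,2}$, this same coloring is automatically $H$-avoiding. Hence the $H$-avoiding chromatic number of every $F$-free graph is bounded by a constant depending only on $s$.

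For the ``only if'' direction, suppose $F$ is not a star. If $F$ contains a cycle, then Erd\H{o}s's probabilistic construction of graphs with arbitrary chromatic number and girth exceeding $|V(F)|$ yields $F$-free graphs of unbounded chromatic number, and since the ordinary chromatic number lower-bounds the $H$-avoiding chromatic number, the class is unbounded. Otherwise $F$ is a forest but not a star; assuming the standard convention that $F$ has no isolated vertex, a short case analysis on the components shows $F$ must contain either $P_4$ (if some component has diameter $\ge 3$) or $2K_2$ (if $F$ has two or more components, each with an edge). The witness family is then the stars $K_{1,r}$: they have diameter $2$ and matching number $1$, so they avoid both $P_4$ and $2K_2$, and are therefore $F$-free for every $r$. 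A direct count---the $r$ leaves of $K_{1,r}$ must be partitioned into color classes of size at most $t-1$, plus one further color for the center---shows the $H$-avoiding chromatic number of $K_{1,r}$ equals $\lceil r/(t-1)\rceil + 1$, which grows without bound in $r$.

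The main conceptual step is identifying $K_{1,r}$ itself as the witness in the necessity direction: it is simple enough to be $F$-free for any non-star forest (without isolated vertices), yet rich enough to force unboundedly many colors in any frugal coloring. The supporting pieces---the forest dichotomy, the bound $\chi(G^2) \le \Delta(G)^2 + 1$, and the leaf-partition count on a star---are all short and elementary, so I expect no serious technical obstacle beyond the degenerate cases flagged above.
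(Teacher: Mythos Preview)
Your proof is correct, and the ``if'' direction matches the paper's exactly (bound $\Delta(G)$, then greedily color $G^2$). For the ``only if'' direction the paper takes a shorter path than you do: rather than splitting into the cycle case (Erd\H{o}s) and the non-star forest case (your $P_4$/$2K_2$ dichotomy), it observes directly that in any $H$-avoiding $C$-coloring of an $F$-free graph, each vertex has at most $(C-1)(t-1)$ neighbors; hence the star $K_{1,(C-1)(t-1)+1}$ is \emph{not} $F$-free, so $F$ embeds in that star and is therefore a star itself. Note that your star witnesses $K_{1,r}$ already handle the case where $F$ contains a cycle (stars are acyclic, hence automatically $F$-free), so the Erd\H{o}s detour is unnecessary; the paper's single degree-bound observation subsumes all of your cases at once. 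What your longer route buys is a slightly more explicit structural reason ($F \supseteq P_4$ or $F \supseteq 2K_2$) for why $K_{1,r}$ is $F$-free, whereas the paper's version gives the cleaner contrapositive.
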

\begin{proof}
Let $H$ be a star with maximum degree $d$ and let $G$ be an $F$-free graph.
Suppose that the class of $F$-free graphs has bounded $H$-avoiding chromatic number with $C$ colors.
This implies that a vertex in $G$ has at most $(C-1)(d-1)$ neighbors, so $F$ must be a star with maximum degree $(C-1)(d-1)+1$.
Now suppose $F$ is a star with maximum degree $D$, which implies that $G$ has maximum degree $D-1$.
By greedily coloring the vertices of $G$ so that each vertex avoids all colors of vertices within distance $2$, we obtain an $H$-avoiding coloring of $G$.
\end{proof}

Throughout this paper, we say a vertex is a {\it $d$-vertex} and a {\it $d^+$-vertex} if the degree of $v$ is exactly $d$ and at least $d$, respectively.
Our main result is the following theorem that deals with a much larger class of bipartite graphs:

\begin{theorem}\label{thm-main}
Let $H$ be a connected bipartite graph that is not a star where all vertices in one part have degree at most $2$.
The class of $F$-free graphs has bounded $H$-avoiding chromatic number if and only if $F$ is a forest where each pair of $3^+$-vertices has even distance.
\end{theorem}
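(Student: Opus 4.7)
My plan is to prove the two directions of the biconditional separately.

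For the necessity direction, I show that if $F$ fails the stated condition, the $H$-avoiding chromatic number on $F$-free graphs cannot be bounded. When $F$ is not a forest, I invoke the classical theorem of Erd\H{o}s alluded to in the introduction to produce, for every $k$, a graph of chromatic number at least $k$ and girth greater than $|V(F)|$; these are $F$-free, and since $H$-avoiding coloring is at least as restrictive as proper coloring, their $H$-avoiding chromatic number is unbounded. When $F$ is a forest with two $3^+$-vertices $u$ and $v$ at odd distance in some component $T$, they lie on opposite sides of $T$'s bipartition. For each $N$ I construct a bipartite graph $G_N$ whose vertices of degree at least $3$ all lie on a single side of the bipartition; a canonical example is the $1$-subdivision of $K_N$, in which the original vertices have degree $N-1$ and the subdivision vertices have degree exactly $2$. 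Any embedding of $T$ in $G_N$ must respect the bipartition and would force $u$ and $v$ to lie on opposite sides of $G_N$'s bipartition while both mapping to vertices of degree at least $3$, which is impossible; hence $G_N$ is $F$-free. A counting argument on proper $k$-colorings of $G_N$ (analogous to the standard lower bounds for the star chromatic number of the subdivision of $K_N$) then shows $\chi_H(G_N)\to\infty$.

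For the sufficiency direction, assume $F$ is a forest in which every component has all its $3^+$-vertices on one side of the bipartition, and let $G$ be an $F$-free graph. By the tree-embedding observation from the introduction, $G$ is $(|V(F)|-2)$-degenerate, so there is an ordering $v_1,\dots,v_n$ of $V(G)$ in which each $v_i$ has at most $|V(F)|-2$ later neighbors. I color the vertices in reverse order. When coloring $v_i$, the forbidden colors consist of (a) the colors of the at most $|V(F)|-2$ already-colored neighbors of $v_i$, and (b) any color whose use on $v_i$ would complete a bicolored copy of $H$ together with already-colored vertices. The key claim is that the number of colors forbidden by (b) is bounded by some constant $C(F,H)$; granting this, a palette of $|V(F)|-1+C(F,H)$ colors completes the greedy coloring.

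The main obstacle is establishing the bound in (b). A naive maximum-degree bound is unavailable since $F$-free graphs may have arbitrarily large maximum degree. The plan is instead a structural argument that exploits both hypotheses: the degree-at-most-$2$ side of $H$ means each partial $H$-configuration through $v_i$ is anchored by a bounded amount of data on the ``thick'' side, while the one-sided $3^+$-vertex structure of $F$ means that a large family of inequivalent partial $H$-configurations at $v_i$ can be spliced together along their thin sides to produce a subgraph containing $F$, contradicting $F$-freeness. The one-sidedness of $F$'s $3^+$-vertices is essential at this step because, as the necessity construction shows, once it fails one can build $F$-free hosts with arbitrarily many overlapping near-copies of $H$ through a single vertex, so no bound of the required form exists.
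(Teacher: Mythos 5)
Your necessity argument has a concrete bug in the choice of construction: the $1$-subdivision of $K_N$ does not work for all admissible $H$. Take $H=C_4$, which is connected, bipartite, not a star, and has every vertex of degree $2$. The $1$-subdivision $S(K_N)$ has girth $6$ and therefore contains no $C_4$ at all; since it is bipartite, its proper $2$-coloring is automatically $C_4$-avoiding, so its $C_4$-avoiding chromatic number is $2$ for every $N$. The paper instead takes $G_{m,n}$, obtained from $K_m$ by replacing each edge by $n$ internally disjoint paths of length $2$, and needs $n$ large (at least $N\cdot|Q|$, where $Q$ is the degree-$\le 2$ side of $H$): the multiplicity $n$ is exactly what allows the thin side of $H$ to be realized between two same-colored hubs, after a Ramsey plus pigeonhole argument. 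Your appeal to "a counting argument analogous to the standard lower bounds for the star chromatic number of the subdivision of $K_N$" does not transfer to general $H$ precisely because $H$ may require several parallel length-$2$ paths between the same pair of hubs.

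The sufficiency direction is an outline rather than a proof, and the greedy framework it relies on is not the right one. Degeneracy alone does not bound $H$-avoiding chromatic number: the $1$-subdivision of $K_N$ is $2$-degenerate but its star chromatic number grows without bound, so the key claim in your step (b) is false for general degenerate graphs and you would need a proof that genuinely uses $F$-freeness, which you only sketch. The paper's actual route is quite different: first reduce to star coloring (since $P_4\subseteq H$ the $P_4$-avoiding chromatic number dominates), then induct on $|V(T)|$ for a tree $T\supseteq F$ with the even-distance property. The graph is partitioned into low-degree vertices $V_S$ (colored via the square, Lemma~\ref{lem:stars}) and high-degree vertices $V_B$; $G[V_B]$ is colored twice, once by the induction hypothesis applied to $T-v$ for a leaf $v$, and once by a rainbow coloring of an auxiliary hypergraph whose hyperedges are the sets $N_G(x)\cap V_B$ for $x\in V_S$. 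The rainbow coloring exists (Lemma~\ref{lem:rainbow}) because this hypergraph has no $T^*$-skeleton, where $T^*$ is the tree on the even-level vertices of $T$; the skeleton lemma (Lemma~\ref{prop:skeleton}) via Sperner's theorem is where the even-distance hypothesis really enters. Your "splicing partial $H$-configurations along their thin sides" may be gesturing at something like the skeleton construction, but as written it does not constitute a proof and the greedy setup around it would not close the argument.
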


The following corollary is a direct consequence of our main result:

\begin{corollary}\label{cor:acyclic}
For a graph $F$,  the class of $F$-free graphs has bounded star chromatic number if and only if
$F$ is a forest where each pair of $3^+$-vertices has even distance.
\end{corollary}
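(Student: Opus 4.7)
The plan is to derive Corollary~\ref{cor:acyclic} as an immediate specialization of Theorem~\ref{thm-main} to the graph $H = P_4$, using only the fact, recalled in the paper, that a star coloring is exactly a $P_4$-avoiding coloring.

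First I would verify that $P_4$ satisfies the hypothesis of Theorem~\ref{thm-main}. The path $P_4$ is connected and bipartite, with the natural bipartition $\{v_1,v_3\}\cup\{v_2,v_4\}$ (writing $P_4 = v_1v_2v_3v_4$). It is not a star, since no single vertex is adjacent to all others. Moreover, every vertex of $P_4$ has degree at most $2$, so certainly the vertices in one part of the bipartition all have degree at most $2$. Thus $H=P_4$ lies in the class of graphs covered by Theorem~\ref{thm-main}.

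Next, by the remark in the introduction (after the definition of $H$-avoiding coloring), a $P_4$-avoiding coloring of $G$ is precisely a star coloring of $G$, and the star chromatic number of $G$ equals its $P_4$-avoiding chromatic number. Therefore, for any graph $F$, the class of $F$-free graphs has bounded star chromatic number if and only if it has bounded $P_4$-avoiding chromatic number. Applying Theorem~\ref{thm-main} with $H=P_4$ immediately yields that this holds if and only if $F$ is a forest in which every pair of $3^+$-vertices is at even distance, which is exactly the statement of the corollary.

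There is no real obstacle here: the entire content is packaged in Theorem~\ref{thm-main}, and the only thing to check is that $P_4$ falls within its hypothesis and that the two colorings coincide. Both checks are immediate from the definitions.
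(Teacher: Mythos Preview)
Your proposal is correct and matches the paper's approach exactly: the paper states that this corollary is ``a direct consequence of our main result,'' and you have simply spelled out that consequence by checking that $P_4$ satisfies the hypotheses of Theorem~\ref{thm-main} and that $P_4$-avoiding coloring coincides with star coloring.
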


The following result is not a direct consequence of Theorem~\ref{thm-main}, nonetheless it follows from our lemmas.
The proof of the following corollary is in Section~\ref{sec:bounded}.

\begin{corollary}\label{cor:acyclic}
For a graph $F$,  the class of $F$-free graphs has bounded acyclic chromatic number if and only if
$F$ is a forest where each pair of $3^+$-vertices has even distance.
\end{corollary}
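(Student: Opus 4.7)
The plan is to derive Corollary~\ref{cor:acyclic} by sandwiching the acyclic chromatic number between two coloring parameters already accessible through Theorem~\ref{thm-main}: the $C_4$-avoiding chromatic number from below, and the star chromatic number from above. Both auxiliary graphs ($C_4$ and $P_4$) are connected, bipartite, not stars, and have all vertex degrees at most $2$, so Theorem~\ref{thm-main} applies with these choices of $H$.

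For the necessity direction I would argue as follows. Suppose the class of $F$-free graphs has bounded acyclic chromatic number. Since a bicolored $4$-cycle is in particular a bicolored cycle, every acyclic $k$-coloring is automatically a $C_4$-avoiding $k$-coloring, so this class also has bounded $C_4$-avoiding chromatic number. Applying Theorem~\ref{thm-main} with $H = C_4$ then forces $F$ to be a forest in which every pair of $3^+$-vertices is at even distance.

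For the sufficiency direction I would invoke the star chromatic number corollary (the $H = P_4$ case of Theorem~\ref{thm-main}): under the stated hypothesis on $F$, the class of $F$-free graphs has bounded star chromatic number. The key observation is that every star coloring is an acyclic coloring. Indeed, a bicolored cycle must have even length at least $4$, so any four consecutive vertices along such a cycle form a bicolored $P_4$, contradicting the star property. Hence bounded star chromatic number of $F$-free graphs upgrades to bounded acyclic chromatic number.

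I do not foresee any serious obstacle. The only things to verify are that both $C_4$ and $P_4$ satisfy the hypotheses imposed on $H$ in Theorem~\ref{thm-main}, which is immediate, and the two monotonicity comparisons between coloring parameters, each of which reduces to a one-line subgraph argument. The fact that the authors say this does not follow directly from Theorem~\ref{thm-main} suggests they prefer a lemma-level proof, but the sandwich above already gives a clean derivation from the theorem and its star-coloring corollary.
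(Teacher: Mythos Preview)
Your proposal is correct and mirrors the paper's own proof: the paper obtains the ``only if'' direction from Lemma~\ref{lem:Gmn-unbouned} with $H=C_4$ (which is exactly the content behind the $H=C_4$ case of Theorem~\ref{thm-main} you invoke), and the ``if'' direction from the observation that every star coloring is acyclic. The only cosmetic difference is that you route through the packaged Theorem~\ref{thm-main} twice, whereas the paper cites the underlying lemma and proposition directly; the substance is identical.
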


Let $H$ be a connected bipartite graph that is not a star where all vertices in one part have degree at most $2$.
In Section~\ref{sec:unbounded} we construct a family of $F$-free graphs that does not have bounded $H$-avoiding chromatic number where $F$ is a forest that has some pair of $3^+$-vertices with odd distance.
On the other hand, in Section~\ref{sec:bounded} we prove that the class of $F$-free graphs has bounded $H$-avoiding chromatic number if $F$ is a forest where each pair of $3^+$-vertices has even distance.
We conclude the paper by making some remarks and suggesting future research directions in Section~\ref{sec:remarks}.


\section{Graphs with large $H$-avoiding chromatic number}\label{sec:unbounded}

Let $H$ be a connected bipartite graph that is not a star where all vertices in one part have degree at most 2.
In this section, we construct a family of $F$-free graphs that does not have bounded $H$-avoiding chromatic number where $F$ is a forest that has some pair of $3^+$-vertices with odd distance.

For an integer $n$, let $[n]=\{1,\ldots,n\}$.
For integers $m$ and $n$, let $G_{m,n}$ be the graph obtained from a complete graph on $m$ vertices by replacing each edge with $n$ disjoint paths of length 2, that is,
\begin{eqnarray*}
V(G_{m,n})&=&\{x_i: i\in [m] \}\cup\{y_{ij}^k: i,j\in[m], k\in[n], \text{ and } i<j\}\\
E(G_{m,n})&=&\{x_ly^k_{ij}: i,j,l\in [m],  k\in[n], l\in\{i,j\}, \text{ and } i<j \}.
\end{eqnarray*}
See Figure~\ref{fig:example} for an illustration of $G_{4, 5}$.
Note that each vertex $y_{ij}^k$ is a 2-vertex and each vertex $x_i$ is a $3^+$-vertex.
Moreover, two vertices $x_i$ and $x_j$ have even distance, which implies that the distance of every pair of $3^+$-vertices is even.
Hence, $G_{m,n}$ is $F$-free, if $F$ is a forest that has some pair of $3^+$-vertices with odd distance.

\begin{figure}[h]
	\begin{center}
  \includegraphics[scale=1]{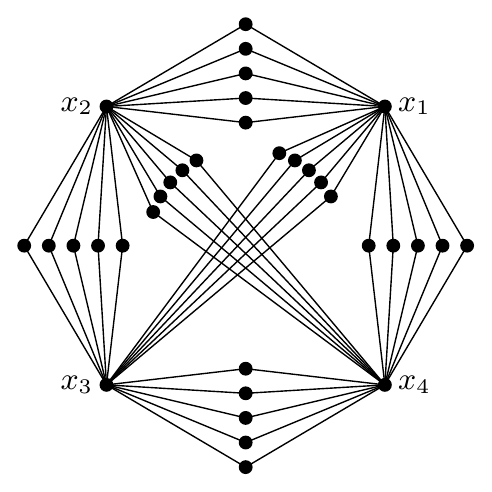}
\caption{The graph $G_{4,5}$.}\label{fig:example}	\end{center}
\end{figure}

\begin{lemma}\label{lem:Gmn-unbouned}
Let $N$ be a positive integer and let $H$ be a connected bipartite graph that is not a star where all vertices in one part have degree at most 2.
For sufficiently large $m$ and $n$, every proper $N$-coloring of $G_{m,n}$ contains a bicolored $H$.
\end{lemma}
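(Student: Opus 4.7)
The plan is to apply pigeonhole twice and then invoke the multicolor Ramsey theorem to extract, from any proper $N$-coloring $\varphi$ of $G_{m,n}$, two colors together with a highly structured monochromatic skeleton into which $H$ can be embedded as a bicolored subgraph.

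First, by pigeonhole at least $m/N$ of the hub vertices $x_1,\dots,x_m$ receive a common color, say color $1$; let $X_1$ be this set of hubs. Next, for each pair $\{x_i,x_j\}\subseteq X_1$, the $n$ middle vertices $y_{ij}^1,\dots,y_{ij}^n$ all avoid color $1$ because $\varphi$ is proper, so pigeonhole produces a color $c_{ij}\in\{2,\dots,N\}$ appearing on at least $n/(N-1)$ of them. Viewing the assignment $\{x_i,x_j\}\mapsto c_{ij}$ as an $(N-1)$-edge-coloring of the complete graph on $X_1$, the multicolor Ramsey theorem yields a monochromatic clique $X_2\subseteq X_1$ of size $|A|$ whenever $|X_1|$ is at least the corresponding Ramsey number. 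After relabeling, assume this common color is $2$, so that for every pair $\{x,x'\}\subseteq X_2$ there are at least $n/(N-1)$ middle vertices of color $2$.

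To finish, I would embed $H$ into $G_{m,n}$ using only colors $1$ and $2$. Let $(A,B)$ be the bipartition of $H$ with $B$ consisting of vertices of degree at most $2$, and split $B=B_1\sqcup B_2$ according to whether the degree in $H$ is $1$ or $2$. Since $H$ is a non-star, $|A|\geq 2$. Fix any bijection $\phi\colon A\to X_2$. For each $v\in B_2$ with $H$-neighbors $a,a'\in A$, greedily let $\phi(v)$ be a previously unused color-$2$ middle vertex between $\phi(a)$ and $\phi(a')$ in $G_{m,n}$; since at least $n/(N-1)$ such middle vertices are available per pair and at most $|B_2|$ demands fall on any single pair, this succeeds for $n$ large. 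Then for each $v\in B_1$ with $H$-neighbor $a$, let $\phi(v)$ be a previously unused color-$2$ neighbor of $\phi(a)$ in $G_{m,n}$; the Ramsey step provides at least $(|A|-1)\cdot n/(N-1)$ color-$2$ neighbors of $\phi(a)$, of which at most $|V(H)|$ have been used so far. The resulting $\phi$ is injective, sends edges of $H$ to edges of $G_{m,n}$ by the bipartite $x$-$y$ structure of $G_{m,n}$, and uses only the two colors $1$ and $2$, yielding the desired bicolored copy of $H$.

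The main obstacle is the distinctness bookkeeping in the last stage: a single middle vertex $y_{ij}^k$ is a neighbor of both $x_i$ and $x_j$ in $G_{m,n}$, so pendants attached to different $A$-vertices can a priori collide on the same $y$-vertex, and the degree-$2$ $B$-vertices must be allocated without exhausting the supply needed for the degree-$1$ ones. Verifying that the reservoir of available color-$2$ neighbors outlasts the demand is exactly what forces the largeness of $n$, while the Ramsey step forces the largeness of $m$. No deeper obstacle arises because the bipartite $x$-$y$ structure of $G_{m,n}$ aligns perfectly with that of $H$, so no ``induced subgraph'' subtleties need to be addressed.
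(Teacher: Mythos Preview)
Your proof is correct and follows essentially the same approach as the paper: pigeonhole on the colors of the middle vertices, multicolor Ramsey to find a monochromatic clique on the hubs, and then an embedding of $H$ into the resulting bicolored structure. The only cosmetic differences are that you apply pigeonhole to the hub colors \emph{before} the Ramsey step (the paper does it afterward, extracting $p$ same-colored hubs from a Ramsey set of size $pN$), and you spell out the embedding of $H$ greedily, whereas the paper simply observes that the bicolored subgraph obtained is a copy of $G_{p,q}$, which visibly contains $H$. The distinctness bookkeeping you flag as an ``obstacle'' is already handled by your greedy allocation and is not a genuine issue.
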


\begin{proof}
Assume $H$ is a connected bipartite graph that is not a star with parts $P$ and $Q$ where all vertices in $Q$ have degree at most 2.
Let $|P|=p$ and $|Q|=q$.
Let $m=m(N, p)$ be sufficiently large and let  $n\geq Nq$.
For $G_{m,n}$, let $(X,Y)$ be a bipartition where $X$ and $Y$ is the set of $3^+$-vertices and $2$-vertices, respectively, of $G_{m, n}$.

Let $\varphi$ be a proper $N$-coloring of $G_{m,n}$ and define $K$ to be an auxiliary complete graph on the vertex set $X$.
We define $\phi$ to be an $N$-edge-coloring of $K$ such that each edge $x_ix_j\in E(K)$ receives a color that appears the most in $\varphi(y_{ij}^1), \ldots, \varphi(y_{ij}^n)$.
By Ramsey's Theorem~\cite{1930Ra}, since $|X|$ is sufficiently large, there exists a $(p N)$-subset $X'$ of $X$ such that every edge in $K[X']$ is monochromatic under $\phi$.
Thus, by the Pigeonhole Principle and the definition of $\phi$, each pair of vertices in $X'$ has at least $\frac{n}{N}\ge q$ common neighbors who received the same color under $\varphi$.
Without loss of generality, we may assume $\varphi(y_{ij}^1)=\cdots=\varphi(y_{ij}^{q})=1$ for each $x_i,x_j\in X'$ where $i<j$.

Moreover, since at least $\frac{|X'|}{N}=p$ vertices of $X'$ received the same color under $\varphi$, we may assume that $\varphi(x_i)=2$ for every $i \in [p]$.
Then the subgraph of $G_{m,n}$ induced by the set $\{x_i: i\in [p]\}\cup\{y_{ij}^k: i,j\in[p], i<j, k\in[q]\}$ is bicolored (using colors $1$ and $2$) under $\varphi$ and it contains a bicolored $H$.
This completes the proof.
\end{proof}

Lemma~\ref{lem:Gmn-unbouned} implies the following.

\begin{proposition}\label{lem:free-unbounded}
Let $H$ be a connected bipartite graph that is not a star where all vertices in one part have degree at most 2.
If $F$ is a graph such that the class of $F$-free graphs has bounded $H$-avoiding chromatic number,
then $F$ is a forest where each pair of $3^+$-vertices has even distance.
\end{proposition}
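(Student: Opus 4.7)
I would prove the contrapositive: if $F$ is either not a forest, or is a forest with some pair of $3^+$-vertices at odd distance, then the class of $F$-free graphs has unbounded $H$-avoiding chromatic number. These two cases require two quite different constructions, and only the second uses the work done in Lemma~\ref{lem:Gmn-unbouned}.

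\emph{Case 1: $F$ contains a cycle.} Let $g$ denote the girth of $F$. By Erd\H{o}s's classical theorem~\cite{1959Er}, for each positive integer $k$ there exists a graph $G_k$ with $\chi(G_k)\geq k$ and girth strictly greater than $g$. Since every cycle of $G_k$ is longer than every cycle of $F$, the graph $F$ cannot appear as a subgraph of $G_k$, so $G_k$ is $F$-free. Because any $H$-avoiding coloring is in particular proper, the $H$-avoiding chromatic number of $G_k$ is at least $\chi(G_k)\geq k$, establishing unboundedness.

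\emph{Case 2: $F$ is a forest with two $3^+$-vertices at odd distance.} I would use exactly the family $G_{m,n}$ defined above; Lemma~\ref{lem:Gmn-unbouned} finishes the job as soon as we verify that $G_{m,n}$ is $F$-free for every $m,n$. The key observation is that $G_{m,n}$ is bipartite with bipartition $(X,Y)$, where $X=\{x_i\}$ consists of all $3^+$-vertices and $Y=\{y_{ij}^k\}$ consists only of $2$-vertices. If $F$ embedded as a subgraph of $G_{m,n}$, then each $3^+$-vertex of $F$ would have to be mapped to some $x_i$, because the degree of a vertex in a subgraph cannot exceed its degree in the host graph, and every $y_{ij}^k$ has degree exactly $2$. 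Consequently, any path in $F$ joining two $3^+$-vertices would correspond to a walk in $G_{m,n}$ between two vertices of $X$, which must have even length by the bipartiteness of $G_{m,n}$. This contradicts the odd-distance hypothesis on $F$, so $G_{m,n}$ is indeed $F$-free.

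The main obstacle is the subtle degree-preservation step in Case~2: although a subgraph embedding does not in general preserve distances, it does force every $3^+$-vertex of $F$ to land on a $3^+$-vertex of the host, and this is precisely what allows the parity argument from the bipartite structure of $G_{m,n}$ to go through. Once this is in hand, invoking Lemma~\ref{lem:Gmn-unbouned} completes the argument.
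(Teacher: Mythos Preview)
Your proposal is correct and follows essentially the same approach as the paper: the paper handles Case~1 by pointing back to the Erd\H{o}s construction mentioned in the introduction, and handles Case~2 by the observation (stated just before Lemma~\ref{lem:Gmn-unbouned}) that every $3^+$-vertex of $G_{m,n}$ lies in $X$ and any two such vertices are at even distance, then invokes Lemma~\ref{lem:Gmn-unbouned}. Your write-up makes the degree-and-parity argument in Case~2 more explicit than the paper does, but the underlying idea is identical.
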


\begin{proof}
As noted in Section~\ref{subsec:intro}, it must be that $F$ is a forest.
By Lemma~\ref{lem:Gmn-unbouned}, for every integer $N$ and every forest $F$ where there are two $3^+$-vertices whose distance is odd, if $m$ and $n$ are sufficiently large, then $G_{m,n}$ is an $F$-free graph whose $H$-avoiding chromatic number is larger than $N$.
Hence, $F$ must be a forest where each pair of $3^+$-vertices has even distance.
\end{proof}


\section{Graphs with bounded $H$-avoiding chromatic number}\label{sec:bounded}

In this section, we prove that for every bipartite graph $H$ and every forest $F$ given in Theorem~\ref{thm-main},  the class of $F$-free graphs has bounded $H$-avoiding chromatic number.
Since $P_4$ is a subgraph of every such $H$, the $P_4$-avoiding chromatic number of a graph $G$ is at least the $H$-avoiding chromatic number of $G$.
Therefore, it is sufficient to show that the class of $F$-free graphs has bounded $P_4$-avoiding chromatic number.
As mentioned in Section~\ref{subsec:intro}, a star coloring is equivalent to a $P_4$-avoiding coloring, thus, we will prove that every $F$-free graph has bounded star chromatic number.
Recall that for a graph $G$, a proper coloring of $G$ is a \emph{star coloring} of $G$ if every bicolored subgraph of $G$ is a star forest.

\begin{proposition}\label{lem:free-bounded}
For every forest $F$ where each pair of $3^+$-vertices has even distance, there exists a constant $c=c(F)$ such that every $F$-free graph has star chromatic number at most $c$.
\end{proposition}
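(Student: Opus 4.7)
I would prove the proposition by induction on $|V(F)|$, handling a base case via tree-depth and the inductive step via a BFS-layering of $G$. I assume $F$ is connected for simplicity; the disconnected case should follow by a standard argument since star chromatic number is determined component-wise. As a setup observation, every $F$-free graph is $(|V(F)|-2)$-degenerate (since any graph of minimum degree at least $|V(F)|-1$ contains every tree of order $|V(F)|$), but degeneracy alone does not bound the star chromatic number, so the even-distance hypothesis must be used essentially, exactly as in Lemma~\ref{lem:Gmn-unbouned}.

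For the base case, if $F$ has no $3^+$-vertex then $F$ is a path $P_k$; any $F$-free graph has no $P_k$ as a subgraph and therefore has tree-depth at most $k-1$. I would then prove the standalone claim that if $G$ has tree-depth $t$ then coloring each vertex by its depth in an elimination tree gives a star coloring using $t$ colors. On a bicolored $P_4 = a\text{-}b\text{-}c\text{-}d$ with depths in $\{d_1,d_2\}$ and $d_1<d_2$, the only color patterns compatible with each edge joining an ancestor to a descendant are $(d_1, d_2, d_1, d_2)$ and $(d_2, d_1, d_2, d_1)$; in the first, $a$ and $c$ would be two ancestors of $b$ at depth $d_1$ (forcing $a=c$), and in the second, $b$ and $d$ would be two ancestors of $c$ at depth $d_1$ (forcing $b=d$), each contradicting the fact that $P_4$ has four distinct vertices.

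For the inductive step, suppose $F$ has a $3^+$-vertex. I would root $G$ at an arbitrary vertex and form BFS layers $L_0, L_1, L_2, \ldots$; any $P_4$ in $G$ then spans at most three consecutive layers. The even-distance hypothesis means $F$ admits a proper $2$-coloring whose larger part $B$ contains every branch vertex, so $F$ decomposes into an ``all-branch skeleton'' joined by even-length internal paths and pendant paths of arbitrary length. The strategy is to identify a strictly smaller forest $F'$ (still satisfying the even-distance condition) such that each BFS-layered slab $G[L_i\cup L_{i+1}]$ is $F'$-free, invoke the inductive hypothesis on each slab, and combine the slab star colorings using a small number of extra ``layer-parity'' colors indexed by $i \bmod k$ for a bounded period $k$.

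The main obstacle will be proving the structural reduction — that $F$-freeness of $G$ forces $F'$-freeness of each BFS slab for a properly chosen smaller $F'$. The even-distance hypothesis should be indispensable at exactly this step: Lemma~\ref{lem:Gmn-unbouned} already witnesses that odd-distance branches destroy any such reduction, because they permit the lifting of $F$ from a slab back into $G$ no matter how the layering is chosen. A secondary subtlety is the treatment of bicolored $P_4$'s that straddle three consecutive BFS layers, which may require the parity coloring to have more than two classes to be broken simultaneously with the within-slab colorings.
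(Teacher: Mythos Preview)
Your base case is essentially the paper's Lemma~\ref{lem:paths}: a $P_k$-free graph has a DFS/elimination tree of depth at most $k-1$, and depth-coloring is a star coloring. That part is fine. (A minor issue: your reduction to connected $F$ via ``star chromatic number is determined component-wise'' confuses components of $G$ with components of $F$; the correct reduction, used in the paper, is to embed any forest $F$ satisfying the hypothesis into a single tree $T$ that still satisfies it, whence $F$-free graphs form a subclass of $T$-free graphs.)

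The genuine gap is the inductive step. You propose to BFS-layer $G$ and assert that each slab $G[L_i\cup L_{i+1}]$ will be $F'$-free for some strictly smaller tree $F'$, but you supply no candidate for $F'$ and no argument for why the even-distance hypothesis forces this. You yourself flag this as ``the main obstacle,'' and indeed it is the entire content of the proposition: nothing about BFS distance from a root interacts with the placement of $3^+$-vertices in $F$, so there is no evident mechanism by which a copy of $F'$ inside a slab could be extended to a copy of $F$ in $G$. As written, the plan is a restatement of the goal rather than a proof.

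The paper's proof takes a quite different route. It partitions $V(G)$ by \emph{degree}, not BFS level: $V_B$ is the set of vertices of degree above a threshold $d$, and $V_S$ the rest. Then $G[V_B]$ is $(F-\text{leaf})$-free (any copy of $F-\text{leaf}$ in $V_B$ extends to $F$ using one extra neighbor), so induction star-colors $G[V_B]$; and $G[V_S]$ has bounded degree, so it is star-colored trivially. The crux is preventing bicolored $P_4$'s that alternate between $V_B$ and $V_S$. For this the paper builds a hypergraph $\cH$ on $V_B$ whose edges are the $V_B$-neighborhoods of vertices in $V_S$, defines an auxiliary tree $T^*$ on the even-distance vertices of $F$ (this is exactly where the hypothesis is used), and shows that a $T^*$-skeleton in $\cH$ would yield a copy of $F$ in $G$. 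A Sperner/degeneracy argument then gives a rainbow coloring of $\cH$, which is combined with the other two colorings as a product. None of this is visible in your outline, and the BFS-layer strategy does not seem to offer a substitute for the skeleton argument.
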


Before proving Proposition~\ref{lem:free-bounded}, we first give a proof of Corollary~\ref{cor:acyclic}.
By Lemma~\ref{lem:Gmn-unbouned} and Proposition~\ref{lem:free-bounded}, one can see that we also obtained a characterization for bounded acyclic coloring.
Recall that for a graph $G$, a proper coloring $G$ is an \emph{acyclic coloring} of $G$ if every bicolored subgraph of $G$ is a forest.

\begin{cor-acyclic}
For a graph $F$,  the class of $F$-free graphs has bounded acyclic chromatic number if and only if
$F$ is a forest where each pair of $3^+$-vertices has even distance.
\end{cor-acyclic}
\begin{proof}[Proof of Corollary~\ref{cor:acyclic}]
The only if part follows from Lemma~\ref{lem:Gmn-unbouned} by letting $H$ be a $4$-cycle.
The if part follows from the fact that every $k$-star coloring is also a $k$-acyclic coloring.
\end{proof}

We start with simple observations on paths and stars.

\begin{lemma}\label{lem:stars}
For a positive integer $n$,
if a graph $G$ is $K_{1,n}$-free, then $G$ is $((n-1)^2+1)$-star colorable.
\end{lemma}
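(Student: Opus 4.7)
The plan is to exploit the fact that a $K_{1,n}$-free graph has small maximum degree, and then observe that a slightly stronger coloring (a $2$-distance coloring, i.e.\ one where vertices within distance $2$ receive distinct colors) automatically yields a star coloring. First I would note that if $G$ contains no $K_{1,n}$ as a subgraph, then no vertex of $G$ can have $n$ distinct neighbors, so $\Delta(G) \le n-1$.

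Next, I would consider $G^2$, the graph on $V(G)$ in which two vertices are adjacent exactly when they are at distance $1$ or $2$ in $G$. For each vertex $v$, the number of vertices within distance $2$ of $v$ (other than $v$) is at most $\Delta(G) + \Delta(G)(\Delta(G)-1) = \Delta(G)^2 \le (n-1)^2$, so $\Delta(G^2) \le (n-1)^2$. A greedy coloring of $G^2$ therefore produces a proper coloring of $G^2$ using at most $(n-1)^2 + 1$ colors; call this coloring $\varphi$. By construction, any two distinct vertices of $G$ at distance at most $2$ in $G$ receive distinct colors under $\varphi$.

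Finally, I would verify that $\varphi$ is a star coloring of $G$. Let $v_1v_2v_3v_4$ be any path in $G$. Because $v_1$ and $v_3$ are at distance at most $2$ in $G$, we have $\varphi(v_1)\ne \varphi(v_3)$, and since $\varphi$ is proper, $\varphi(v_2)\notin\{\varphi(v_1),\varphi(v_3)\}$. Thus at least three colors appear on $\{v_1,v_2,v_3,v_4\}$, so no $P_4$ is bicolored under $\varphi$, which by definition makes $\varphi$ a star coloring of $G$.

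There is no real obstacle in this argument; the only point that requires a moment's thought is the implication that a $2$-distance coloring is automatically a star coloring, and that is a short case check as above. The bound $(n-1)^2+1$ then falls out of the degree bound in $G^2$ together with the greedy algorithm.
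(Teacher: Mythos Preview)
Your argument is correct and essentially identical to the paper's proof: bound $\Delta(G)\le n-1$, pass to $G^2$ with $\Delta(G^2)\le (n-1)^2$, greedily color, and observe that a proper coloring of $G^2$ is a star coloring of $G$. You even spell out the last implication in more detail than the paper does.
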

\begin{proof}
Since we can color each component separately, we may assume $G$ is connected.
Let $G$ be a $K_{1,n}$-free graph, so $G$ has maximum degree $\Delta(G)$ at most $n-1$.
Thus, $\Delta(G^2)\le (n-1)^2$, where the square $G^2$ of $G$ is the graph obtained from $G$ by joining two vertices with distance at most two.
By using coloring the vertices in a greedy fashion, it follows that $G^2$ is $((n-1)^2+1)$-colorable.
Since a proper coloring of $G^2$ gives a star coloring of $G$, we conclude that $G$ is $((n-1)^2+1)$-star colorable.
\end{proof}

\begin{lemma}\label{lem:paths}
For a positive integer $n$,
if a graph $G$ is $P_n$-free then $G$ is $(n-1)$-star colorable.
\end{lemma}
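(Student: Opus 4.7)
The plan is to exploit the structure of a depth-first search (DFS) tree. Since we can handle each component independently, I would assume $G$ is connected. Fix an arbitrary root $r$ and perform a DFS from $r$, producing a spanning tree $T$ rooted at $r$. I would use the standard structural fact for DFS on undirected graphs: every non-tree edge of $G$ is a \emph{back edge}, meaning that for every edge $uv \in E(G)$, one of $u,v$ is a proper ancestor of the other in $T$ (there are no cross edges).

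I would then color each vertex of $G$ by its depth in $T$. Any root-to-leaf path in $T$ is a path in $G$, so the $P_n$-freeness of $G$ forces $T$ to have depth at most $n-2$. Hence the depths take values in $\{0,1,\ldots,n-2\}$, providing $n-1$ colors. Properness is immediate: adjacent vertices in $G$ are in an ancestor-descendant relation by the back-edge property, and ancestor-descendant pairs have distinct depths.

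To verify the star condition, I would argue by contradiction. Suppose there is a bicolored $P_4$, namely $v_1v_2v_3v_4$ with depths alternating $a,b,a,b$. I would split into cases according to whether $a<b$ or $b<a$. If $a<b$, the back-edge property applied to the edges $v_1v_2$ and $v_3v_2$ shows that both $v_1$ and $v_3$ are proper ancestors of $v_2$; if $b<a$, the edges $v_2v_3$ and $v_4v_3$ show that both $v_2$ and $v_4$ are proper ancestors of $v_3$. Since the ancestors of any vertex in a rooted tree form a chain (the root-to-vertex path), two distinct ancestors must have distinct depths. But the two ancestors in question have the same depth ($a$ or $b$ respectively), forcing them to coincide and contradicting the fact that $v_1,v_2,v_3,v_4$ are distinct.

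There is no genuine obstacle here; the argument is essentially bookkeeping about the DFS tree. The only subtle point to keep in mind while writing the proof carefully is that the DFS is on an undirected graph, so we really do have the no-cross-edges property that rules out the alternative configuration for the $P_4$. This is precisely what lets the depth coloring give a star coloring essentially for free, and the bound $n-1$ is then inherited directly from the $P_n$-free hypothesis via the tree-depth estimate.
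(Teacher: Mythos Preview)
Your proposal is correct and follows essentially the same approach as the paper: both take a DFS spanning tree, color by depth, use $P_n$-freeness to bound the depth by $n-2$, and rule out a bicolored $P_4$ via the ancestor--descendant structure of DFS edges. The only cosmetic difference is the final contradiction: you argue that two same-depth ancestors of a common vertex must coincide, while the paper derives that $v_2$ and $v_3$ are each descendants of the other.
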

\begin{proof}
Since we can color each component separately, we may assume $G$ is connected.
Let $G$ be a $P_n$-free graph.
Pick an arbitrary vertex $v$ of $G$, and let $T$ be a spanning tree obtained from a depth first search algorithm. For a vertex $u$ of $G$, let the level of $u$, denoted $\varphi(u)$, be the distance from $v$ to $u$ in $T$.
Since $G$ is $P_n$-free (so is $T$), each vertex has level at most $n-2$.
We claim that $\varphi$ is a star coloring of $G$.

First, by the definition of a depth first search tree, every pair of vertices with the same level is non-adjacent, so $\varphi$ is a proper coloring.
For vertices $u_1,u_2 \in V(G)$, if $u_1$ is on the path from $v$ to $u_2$ in $T$, then we say $u_2$ is a descendant of $u_1$ in $T$.
Suppose there is a path $P_3:u_1u_2u_3$ in $G$ with $\varphi(u_1)=\varphi(u_3)$.
Then $u_1$ and $u_3$ are in the same level, and so $u_1$ and $u_3$ are descendants of $u_2$ by the definition of a depth first search tree.
Therefore, if there is a bicolored $P_4:v_1v_2v_3v_4$ such that $\varphi(v_1)=\varphi(v_3)$ and $\varphi(v_2)=\varphi(v_4)$, then $v_2$ is a descendant of $v_3$ and $v_3$ is a descendant of $v_2$, which contradicts that $v_2$ and $v_3$ are distinct vertices.
Hence, there is no bicolored $P_4$, so $\varphi$ is an $(n-1)$-star coloring of $G$.
\end{proof}

We will typically use $\cH$ to denote a hypergraph.
We say a hypergraph is \emph{simple} if it does not have nested hyperedges, that is, there are no two distinct hyperedges $e$ and $e'$ where $e\subseteq e'$.
The \textit{degree} $\deg_{\cH}(v)$ of a vertex $v$ is the number of hyperedges of $\cH$ containing $v$.
Let $\delta(\cH)$ be the minimum degree of (a vertex of) $\cH$.

For a graph $G$ and a hypergraph $\cH$, we say
a pair $(V',E')$ where $V'\subseteq V(\cH)$ and $E'\subseteq E(\cH)$
is a \emph{$G$-skeleton} of $\cH$ if there are bijections $f: V(G)\rightarrow V'$ and
$g: E(G)\rightarrow E'$ such that $uv\in E(G)$ if and only if $\{ f(u),f(v) \} \subseteq g(uv)$.
For example, the hypergraph $\cH$ in Figure~\ref{fig:skeleton} has a $C_4$-skeleton formed by $(\{v_1,v_2,v_3,v_4\}, \{e_1,e_2,e_4,e_5\})$.

\begin{figure}
\centering
\includegraphics[scale=1]{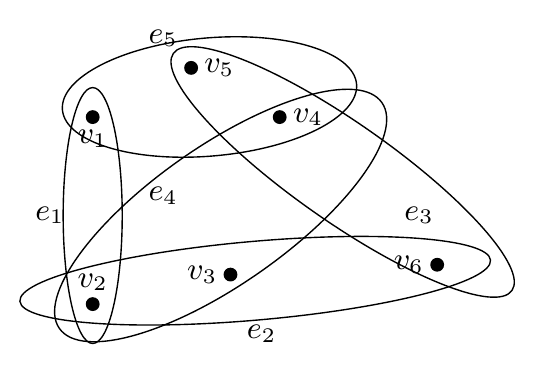}
\quad\qquad\includegraphics[scale=1]{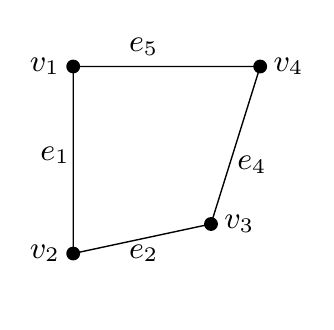}
\caption{A $C_4$-skeleton (right) of a hypergraph $\cH$ (left).}\label{fig:skeleton}
\end{figure}

\begin{lemma}\label{prop:skeleton}
Let $p(2)=1$ and $p(n)={n-2 \choose \lceil \frac{n-2}{2} \rceil }+n-1$ for $n\ge 3$.
For a nontrivial tree $T$ with $n$ vertices, if $\cH$ is a simple hypergraph with $\delta(\cH)\ge p(n)$, then $\cH$ has a $T$-skeleton.
\end{lemma}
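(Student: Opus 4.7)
The plan is to proceed by induction on $n$, the number of vertices of $T$. The base case $n=2$ is immediate: pick any vertex $x\in V(\cH)$; since $\deg_\cH(x)\ge 1$ there is a hyperedge $e$ containing $x$, and (as hyperedges have size at least $2$) we can pick another vertex $y\in e$ and set $V'=\{x,y\}$, $E'=\{e\}$.

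For $n\ge 3$, choose a leaf $v$ of $T$ with unique neighbor $u$, and set $T'=T-v$, a tree on $n-1$ vertices with $n-2$ edges. A direct check using the monotonicity of central binomial coefficients gives $p(n)\ge p(n-1)$, so the induction hypothesis applies to $T'$ and yields a $T'$-skeleton $(V',E')$ of $\cH$ with bijections $f\colon V(T')\to V'$ and $g\colon E(T')\to E'$. I then want to extend this skeleton by finding a vertex $w\in V(\cH)\setminus V'$ and a hyperedge $e\in E(\cH)\setminus E'$ with $\{f(u),w\}\subseteq e$; setting $f(v):=w$ and $g(uv):=e$ then produces the desired $T$-skeleton.

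The key step is producing such a pair $(w,e)$. Let $x:=f(u)$. Since $\deg_{\cH}(x)\ge p(n)$ and $|E'|=n-2$, the number of hyperedges incident to $x$ that lie outside $E'$ is at least
\[
p(n)-(n-2)=\binom{n-2}{\lceil (n-2)/2\rceil}+1;
\]
call them $e_1,\dots,e_d$. Suppose, toward a contradiction, that every $e_i$ is contained in $V'$. Then the sets $e_i\setminus\{x\}$ are $d$ distinct subsets of the $(n-2)$-element set $V'\setminus\{x\}$. Since $\cH$ is simple, no two of the $e_i$ are nested, and because all of them contain $x$ the family $\{e_i\setminus\{x\}\}$ is an antichain in $2^{V'\setminus\{x\}}$. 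Sperner's theorem therefore forces $d\le\binom{n-2}{\lfloor (n-2)/2\rfloor}=\binom{n-2}{\lceil (n-2)/2\rceil}$, contradicting the displayed lower bound on $d$. Hence some $e_i\not\subseteq V'$, and any $w\in e_i\setminus V'$ together with $e:=e_i$ extends the skeleton.

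The value of $p(n)$ is engineered precisely so that this counting closes: the term $n-2$ pays for the hyperedges already consumed by the $T'$-skeleton, and the $+1$ beats the Sperner bound. The main obstacle is really just the one combinatorial observation that after deleting the common element $x$ from each candidate hyperedge, the resulting family remains an antichain on an $(n-2)$-element ground set; everything else, including verifying $p(n-1)\le p(n)$ and handling the base case, is routine.
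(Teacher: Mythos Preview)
Your proof is correct and follows essentially the same approach as the paper: induction on $n$, deleting a leaf to obtain a $T'$-skeleton, and then using Sperner's theorem on the $(n-2)$-element set $V'\setminus\{f(u)\}$ to guarantee a new hyperedge through $f(u)$ escaping $V'$. The only cosmetic difference is that you first discard the $n-2$ edges of $E'$ and then apply Sperner, whereas the paper bounds the Sperner term and the $|E'|$ term separately before subtracting both from $p(n)$; the arithmetic is identical.
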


\begin{proof}
We show by induction on $|V(T)|$.
If $|V(T)|=2$, then $T$ is a path on two vertices.
Since $\delta(\cH)\ge 2$, there exist two distinct vertices $u$, $v$ of $\cH$ and an hyperedge $e$ of $\cH$ such that $u,v \in e$.
Thus, $(\{u,v\},\{e\})$ forms a $T$-skeleton.

Suppose that the lemma is true for all trees with $n-1$ vertices for some $n\ge 3$.
Fix a tree $T$ with $n$ vertices and let $uv$ be an edge of $T$ where $v$ is a leaf.
Let $T'=T-v$ and consider a simple hypergraph $\cH$ with $\delta(\cH)\ge p(n)$.
By the induction hypothesis, $\cH$ has a $T'$-skeleton since $\delta(\cH)\geq p(n)>p(n-1)$.
Let a pair of maps $f:V(T')\to V'$ and $g:E(T')\to E'$ give a $T'$-skeleton $(V', E')$ of $\cH$.
If there exists $e\in E(\cH)\setminus E'$ such that $f(u)\in e$ and $e\setminus V' \neq \emptyset$, say $x\in e\setminus V'$, then $(V'\cup \{x\}, E'\cup  \{e\})$ forms a $T$-skeleton of $\cH$;
in other words, let $f(v)=x$ and $g(uv)=e$.
Therefore, it is sufficient to prove that there exists such a hyperedge $e \in E(\cH)$.

Since $\cH$ is simple, there exist at most ${n-2 \choose \lceil \frac{n-2}{2} \rceil }$ hyperedges contained in $V'$ that contain $f(u)$ by Sperner's Theorem~\cite{1928Sp}.
Since there are at least $p(n)$ hyperedges in $\cH$ containing $f(u)$, $|E'|=n-2$, and $p(n)-{n-2 \choose \lceil \frac{n-2}{2} \rceil}-|E'|>0$, it follows that there is a hyperedge $e \in E(\cH)\setminus E'$ such that $f(u)\in e$ and $e\setminus V'\neq \emptyset$.
This completes the proof.
\end{proof}

For a nonempty hypergraph $\cH$, the {\it rank} of $\cH$ is the size of a largest hyperedge of $\cH$.
For a hypergraph $\cH$, a \emph{rainbow $n$-coloring} of $\cH$ is an $n$-coloring of the vertices of $\cH$ such that if two vertices receive the same color, then there is no hyperedge that contains both vertices.

\begin{lemma}\label{lem:rainbow}
Let $n$ and $r$ be positive integers at least $2$ where $p(n)$ is defined as in Lemma~\ref{prop:skeleton} and $q(n,r)=(p(n)-1)(r-1)+1$.
For an $n$-vertex tree $T$ and a simple hypergraph $\cH$ with rank at most $r$,
if $\cH$ has no $T$-skeleton, then $\cH$ has a rainbow $q(n,r)$-coloring.
\end{lemma}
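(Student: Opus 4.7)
My plan is to proceed by induction on $|V(\cH)|$, using Lemma~\ref{prop:skeleton} to locate a vertex of small $\cH$-degree and then greedily extending a rainbow coloring of the rest. The base case $|V(\cH)| \le 1$ is trivial, as any assignment of colors is vacuously rainbow.

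For the inductive step, the contrapositive of Lemma~\ref{prop:skeleton} applied to $\cH$ gives $\delta(\cH) \le p(n)-1$, so there exists a vertex $v$ incident to at most $p(n)-1$ hyperedges. Let $\cH'$ be the hypergraph obtained from $\cH$ by deleting $v$ together with every hyperedge that contains $v$. I will verify that $\cH'$ inherits both relevant properties of $\cH$: it is still simple, since any nested pair $e \subseteq e'$ in $E(\cH')$ would already be a nested pair in $E(\cH)$, contradicting simplicity of $\cH$; and it still has no $T$-skeleton, since a $T$-skeleton $(V', E')$ of $\cH'$ uses only vertices in $V(\cH)$ and hyperedges in $E(\cH)$, so it would be a $T$-skeleton of $\cH$ as well. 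The rank of $\cH'$ is obviously at most $r$.

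By the inductive hypothesis, $\cH'$ admits a rainbow $q(n,r)$-coloring $\varphi'$. To extend $\varphi'$ to $\cH$, I need only choose a color for $v$ that differs from the color of every other vertex sharing a hyperedge with $v$ in $\cH$. Since $v$ lies in at most $p(n)-1$ hyperedges and each has at most $r-1$ other vertices, the number of forbidden colors is at most $(p(n)-1)(r-1) = q(n,r)-1$, leaving at least one available color in the palette of size $q(n,r)$. The resulting extended coloring is then rainbow on all of $\cH$.

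The single point requiring care is the deletion convention: hyperedges containing $v$ must be removed entirely rather than shrunk to $e \setminus \{v\}$, as the latter operation could create nested hyperedges and destroy simplicity. With this convention, as outlined above, both simplicity and the absence of a $T$-skeleton pass to $\cH'$ for free, and the entire argument reduces to the degree inequality from Lemma~\ref{prop:skeleton} combined with a one-step greedy extension.
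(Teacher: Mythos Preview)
Your argument has a genuine gap in the extension step. When you delete every hyperedge containing $v$, the inductive coloring $\varphi'$ is rainbow only with respect to the surviving hyperedges of $\cH'$. But a hyperedge $e$ of $\cH$ that contains $v$ also contains other vertices, and nothing prevents $\varphi'$ from assigning the same color to two of those. Choosing the color of $v$ to avoid its neighbors fixes collisions between $v$ and the rest of $e$, but it does nothing about collisions among vertices of $e\setminus\{v\}$. A tiny example: take $V(\cH)=\{v,a,b\}$ with the single hyperedge $\{v,a,b\}$; your $\cH'$ has no hyperedges, so $\varphi'(a)=\varphi'(b)$ is allowed, and no choice of $\varphi(v)$ yields a rainbow coloring of $\cH$.

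The paper's proof avoids this by doing exactly what you ruled out: it shrinks each hyperedge to $e\setminus\{v\}$, and then restores simplicity by passing to a maximal simple sub-hypergraph $\cH''$ (discarding any hyperedge contained in another). A rainbow coloring of $\cH''$ is automatically rainbow on every discarded hyperedge as well, hence on all of $\cH'$; this is precisely what guarantees that the vertices of $e\setminus\{v\}$ are already pairwise distinct before you color $v$. Your concern about nested hyperedges is legitimate, but the fix is this extra pruning step, not wholesale deletion of the edges through $v$.
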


\begin{proof}
Let $T$ be an $n$-vertex tree.
We prove by induction on $|V(\cH)|$.
It is obvious when $|V(\cH)|=1$.
Fix a simple hypergraph $\cH$ where $|V(\cH)|>1$.
Suppose that the lemma is true for every hypergraph with less than $|V(\cH)|$ vertices.
If $\delta(\cH) \ge p(n)$ then $\cH$ has a $T$-skeleton by Lemma~\ref{prop:skeleton}, which is a contradiction.
Therefore, there must be a vertex $v$ whose degree is at most $p(n)-1$.
Now, let $\cH'$ be the following hypergraph:
\begin{eqnarray*}
V(\cH')&=&V(\cH)\setminus\{v\}\\
E(\cH')&=&\{e-v : e\in E(\cH)\}.
\end{eqnarray*}
Let $\cH''$ be a maximal simple hypergraph contained in $\cH'$, which may be obtained by removing every hyperedge contained in some hyperedge of $\cH'$ one by one.
Since $\cH''$ is a simple hypergraph with rank at most $r$ and has no $T$-skeleton satisfying $|V(\cH'')|<|V(\cH)|$,
by the induction hypothesis, $\cH''$ has a rainbow $q(n,r)$-coloring $\varphi$.
Since each hyperedge of $\cH$ has size at most $r$, the vertex $v$ has at most $(p(n)-1)(r-1)$ neighbors in $\cH$.
From the fact that $q(n,r)=(p(n)-1)(r-1)+1$, there exists an available color $\alpha$ for $v$.
Then, by extending $\varphi$ to also include $\varphi(v)=\alpha$, we obtain a rainbow $q(n,r)$-coloring of $\cH$.
This completes the proof.
\end{proof}

\bigskip

Now we are ready to give a proof of Proposition~\ref{lem:free-bounded}.

\begin{proof}[Proof of Proposition~\ref{lem:free-bounded}]
For every forest $F$ where each pair of $3^+$-vertices has even distance, there exists a tree $T$ containing $F$ as a subgraph where each pair of $3^+$-vertices of $T$ has even distance.
Therefore, it is sufficient to prove the proposition for every tree $T$ where each pair of $3^+$-vertices has even distance.

We show by induction on $|V(T)|$.
If $T$ has at most four vertices, then since $T$ is either a path or a star, the proposition holds by Lemmas~\ref{lem:stars} and \ref{lem:paths}.

Suppose that the proposition is true for all trees with at most $n-1$ vertices where $n\ge 5$.
Let $T$ be an $n$-vertex tree where each pair of $3^+$-vertices has even distance.
If $T$ has no $3^+$-vertex, then $T$ is a path and the proposition holds by Lemma~\ref{lem:paths}.
Suppose that $T$ has a $3^+$-vertex, and we define a tree $T^*$ as the following:
The vertex set $V(T^*)$ is the set of all $3^+$-vertices and the vertices whose distance to a $3^+$-vertex is even.
Note that for a vertex $v$ of $T$, the parity of the distance from $v$ to an arbitrary $3^+$-vertex is the same, since each pair of $3^+$-vertices of $T$ has even distance.
We add an edge between two vertices of $V(T^*)$ if and only if their distance is two.
See Figure~\ref{fig:T-star} for an illustration of $T$ and $T^*$.
Note that $T^*$ is also a tree since each pair of $3^+$-vertices of $T$ has even distance.

\begin{figure}
\centering
\includegraphics[scale=1.15]{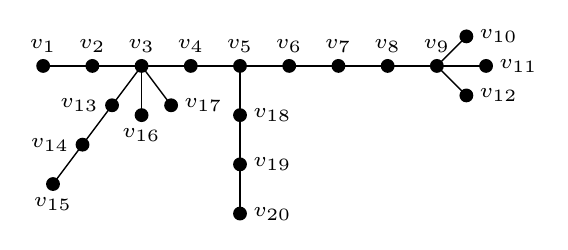}
\includegraphics[scale=1.15]{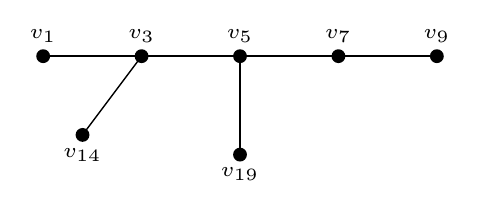}
\caption{Trees $T$ (left) and $T^*$ (right).}\label{fig:T-star}
\end{figure}

Consider a $T$-free graph $G$.
Let $d=|V(T^*)|\cdot \Delta(T)+n$, and let $V_B=\{v\in V(G): \deg_\cH(v)>d\}$ and $V_S=V(G)\setminus V_B$.
Let $vv'$ be an edge of $T$ where $v$ is a pendent vertex of $T$ and let $T'=T-v$.
Note that a pair of $3^+$-vertices in $T'$ still has even distance.
By the induction hypothesis, there is a positive integer $c(T')$ such that a $T'$-free graph is $c(T')$-star colorable.
We let
\[c(T)= c(T')\cdot q(|V(T^*)|,d)+  d^2+1,\]
where $q(|V(T^*)|,d)$ is defined as in Lemma~\ref{lem:rainbow}.
We will show that $G$ is $c(T)$-star colorable.
It is clear that $G[V_S]$ is $K_{1,d+1}$-free, so $G[V_S]$ has a $(d^2+1)$-star coloring $\varphi_S$ by Lemma~\ref{lem:stars}.

For the graph $G[V_B]$, we will define two star colorings $\varphi_B^1$ and $\varphi^2_B$ that will be utilized to obtain a star coloring of $G$.
We first prove two structural lemmas regarding the graph $G[V_B]$.

\begin{claim}\label{claim:noT'}
The graph $G[V_B]$ is $T'$-free.
\end{claim}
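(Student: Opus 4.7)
The plan is to argue by contradiction. Suppose $G[V_B]$ contains $T'$ as a subgraph, and fix an embedding $f : V(T') \to V_B$ witnessing this. I will extend $f$ to an embedding of $T$ into $G$, contradicting the assumption that $G$ is $T$-free.

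Recall that $T$ is obtained from $T'$ by re-attaching the pendant vertex $v$ to its unique neighbor $v' \in V(T')$. Hence lifting $f$ to an embedding of all of $T$ amounts to producing a single vertex $w \in V(G) \setminus f(V(T'))$ that is adjacent in $G$ to $f(v')$; one then defines $f(v) := w$. It is important to note here that any additional adjacencies between $w$ and the other images $f(u)$ are irrelevant, since only subgraph (not induced subgraph) containment is required to reach the desired contradiction.

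The existence of such a $w$ follows by a trivial pigeonhole argument from the defining property of $V_B$. Because $f(v') \in V_B$, its degree in $G$ strictly exceeds $d = |V(T^*)| \cdot \Delta(T) + n \geq n$, whereas $|f(V(T'))| = n - 1$. Thus $f(v')$ has strictly more neighbors in $G$ than there are vertices in the current image of $f$, so at least one neighbor of $f(v')$ lies outside $f(V(T'))$; any such neighbor serves as the required $w$.

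I do not anticipate any real obstacle here, since the reduction of $T$ to $T'$ by deleting a pendant vertex makes the extension problem genuinely one-dimensional. The large specific value of $d$ is evidently tailored for the forthcoming structural/coloring steps built on top of this claim rather than for the claim itself, which needs only $d \geq n - 1$.
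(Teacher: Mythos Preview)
Your proposal is correct and follows essentially the same argument as the paper: assume a copy of $T'$ lies in $G[V_B]$, use that the image of $v'$ has degree greater than $d>|V(T')|$ to find a neighbor outside that copy, and extend to a copy of $T$ in $G$. Your additional remarks (that extra adjacencies of $w$ are harmless and that only $d\ge n-1$ is needed here) are accurate observations not made explicit in the paper but do not change the approach.
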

\begin{proof}
Suppose to the contrary that $G[V_B]$ contains $T'$ as a subgraph.
Let $G'$ be a subgraph of $G[V_B]$ isomorphic to $T'$, and let $u'$ be the vertex of $G'$ corresponding to the vertex $v'$ of $T'$. Since the degree of $u'$ is larger than $d$, which is larger than $|V(T')|$, we know that $u'$ has a neighbor $u''$ in $G\setminus V(G')$.
Now, by adding the vertex $u''$ and the edge $u'u''$ to $G'$, we obtain a subgraph of $G$ isomorphic to $T$, which is a contradiction.
\end{proof}

Let $\cH'$ be a hypergraph where $V(\cH')=V_B$ and
$  E(\cH')=\{ N_G(x)\cap V_B : x\in V_S \}$, and let $\cH$ be a simple hypergraph obtained from $\cH'$ by
sequentially deleting a hyperedge $e'$ of $\cH' $ where there exists an edge $e$ such that $e'\subseteq e$.
Note that $\cH$ must be a simple hypergraph.
Since each vertex of $V_S$ has degree at most $d$ in $G$, the rank of $\cH$ is at most $d$.
See Figure~\ref{fig:hypergraph:proof} for an illustration.

\begin{figure}
\begin{center}
\includegraphics[scale=1]{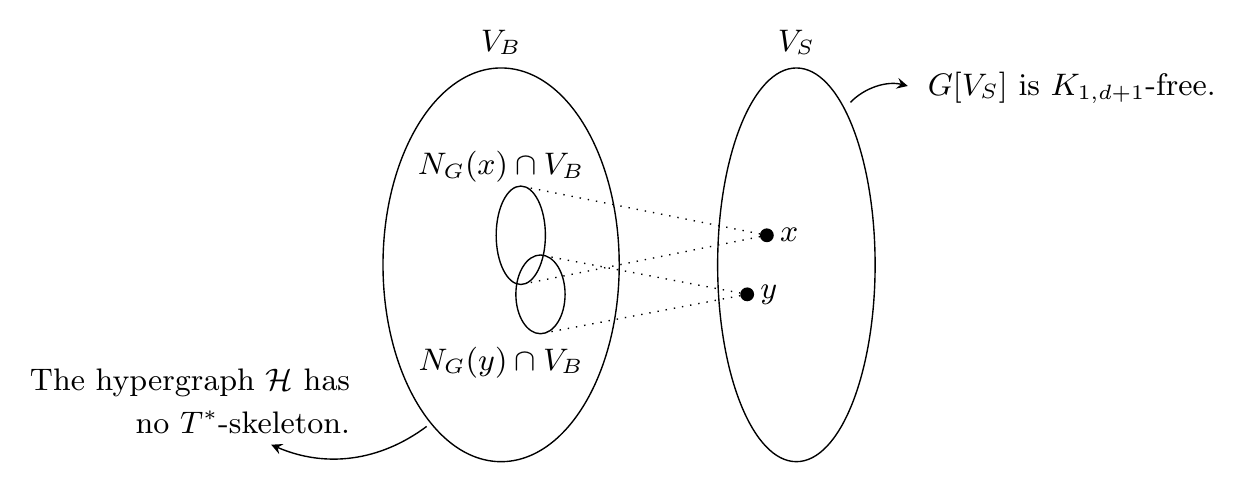}
\caption{An illustration for the proof}\label{fig:hypergraph:proof}
\end{center}
\end{figure}

\begin{claim}\label{claim:noT*}
The hypergraph $\cH$ has no $T^*$-skeleton.
\end{claim}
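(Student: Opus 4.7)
The plan is to derive a contradiction by embedding $T$ into $G$, using a hypothetical $T^*$-skeleton in $\cH$ together with the high degree of every vertex in $V_B$ and the availability of witnesses in $V_S$ for every hyperedge.

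First I would record the structure of $T$ relative to $V(T^*)$. Since every pair of $3^+$-vertices of $T$ has even distance, the parity of the distance from any fixed vertex of $T$ to the $3^+$-vertices of $T$ is well-defined, and $V(T)$ partitions as $V(T^*)\sqcup V_M\sqcup V_L$, where $V_M$ (resp.\ $V_L$) consists of the degree-$2$ (resp.\ degree-$1$) vertices at odd parity. Each $w\in V_M$ has both $T$-neighbors in $V(T^*)$ and is the unique middle vertex of a length-$2$ path in $T$; this sets up a bijection between $V_M$ and $E(T^*)$, using that a vertex shared as the middle of two edges of $T^*$ would have $T$-degree at least $3$ and hence lie in $V(T^*)$. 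Each $w\in V_L$ has its unique $T$-neighbor in $V(T^*)$. Moreover, $T$ has no edge within $V(T^*)$ or within $V_M\cup V_L$, because the two endpoints of such an edge would have the same parity and therefore be at even distance, contradicting adjacency.

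Assuming toward contradiction that $\cH$ has a $T^*$-skeleton realized by bijections $f\colon V(T^*)\to V'\subseteq V_B$ and $g\colon E(T^*)\to E'$, I would construct an injective map $\phi\colon V(T)\to V(G)$ in three stages. Set $\phi(u)=f(u)$ for every $u\in V(T^*)$. For each edge $uv\in E(T^*)$, observe that the hyperedge $g(uv)$ belongs to $\cH'$ as well, so there exists $x_{uv}\in V_S$ with $N_G(x_{uv})\cap V_B=g(uv)$; set $\phi(w_{uv})=x_{uv}$ where $w_{uv}\in V_M$ is the middle vertex associated to $uv$. Distinct hyperedges force distinct $x_{uv}$, and these all lie in $V_S$, hence are disjoint from the images of $V(T^*)$. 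Finally, process $V_L$ in any order: for a leaf $w\in V_L$ with $T$-neighbor $u\in V(T^*)$, greedily pick $\phi(w)\in N_G(f(u))$ that has not yet been used.

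The greedy step succeeds since $\deg_G(f(u))>d\ge n$ while fewer than $n$ images have been chosen at any point, so $N_G(f(u))$ still contains at least two unused vertices. Injectivity then holds by construction, and every edge $uv$ of $T$ is preserved: if $uv$ joins $V(T^*)$ to $V_M$, then $f(u)\in g(uv)=N_G(x_{uv})\cap V_B$; if it joins $V(T^*)$ to $V_L$, this is built into the greedy choice. Thus $\phi(V(T))$ carries a subgraph of $G$ isomorphic to $T$, contradicting the $T$-freeness of $G$. The main obstacle I anticipate is simply the bookkeeping that the middle-vertex correspondence $V_M\leftrightarrow E(T^*)$ is well-defined and injective, which is precisely the place where the even-distance hypothesis on the $3^+$-vertices of $T$ is used in an essential way.
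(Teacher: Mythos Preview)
Your proposal is correct and follows essentially the same route as the paper: realize $V(T^*)$ by the skeleton vertices, the degree-$2$ odd-parity vertices by the $V_S$-witnesses of the skeleton hyperedges, and the pendant odd-parity leaves by greedily chosen fresh neighbors in $G$, using $\deg_G>d\ge n$. Your write-up is in fact more explicit than the paper's, which does not set up the bijection $V_M\leftrightarrow E(T^*)$ and instead simply reserves $\Delta(T)$ extra neighbors for each skeleton vertex before asserting that the resulting vertex set carries a copy of $T$.
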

\begin{proof}
Suppose to the contrary that $(V^*,E^*)$ is a $T^*$-skeleton of $\cH$.
We will find a subset $A$ of $V(G)$ where $G[A]$ contains a subgraph that is isomorphic to $T$, which is a contradiction.
Since $T$ is an $n$-vertex tree, we may assume that $V^*=\{t_1, \ldots, t_n\}$.

For each hyperedge $e\in E^*$, there is a corresponding vertex $x_e\in V_S$ such that $e=N_G(x_e)\cap V_B$.
Since $|V^*|=|V(T^*)| \le \frac{|V(T)|}{2}$ and $|E^*|=|E(T^*)|=|V(T^*)|-1\le \frac{|V(T)|}{2}-1$, we know that $|V^*\cup \{x_e: e\in E^*\}|\leq n-1$.
Therefore, since each vertex $t\in V^*$ has more than $|V(T^*)|\cdot \Delta(T)+n$ neighbors, it follows that
$|N_G(t)\setminus (V^*\cup\{x_e: e\in E^*\})|\ge |V(T^*)|\cdot \Delta(T)$.

Let $N_1$ be a $\Delta(T)$-set from $N_G(t_i)\setminus (V^*\cup\{x_e: e\in E^*\})$ and for each $i\in[n]\setminus\{1\}$, let $N_i$ be a set of $\Delta(T)$ vertices from
\[N_G(t_i)\setminus \left(V^*\cup\{x_e: e\in E^*\}\cup\left(\bigcup_{j\in[i-1]}N_j\right)\right).\]
Note that for $t_i, t_j\in V^*$ where $i\neq j$, it must be the case that $N_{i}$ and $N_{j}$ are disjoint.
Therefore, the subgraph of $G$ induced by the set
\[A=V^*\cup\{x_e: e\in E^*\}\cup \left(\bigcup_{i\in [n]} N_i\right)\]
contains $T$ as a subgraph, which is a contradiction.
Hence, $\cH$ has no $T^*$-skeleton.
\end{proof}

By Claim~\ref{claim:noT'}, $G[V_B]$ is $T'$-free, and by the induction hypothesis, $G[V_B]$ has a $c(T')$-star coloring $\varphi^1_B$.
By Claim~\ref{claim:noT*}, $\cH$ has no $T^*$-skeleton, and so by Lemma~\ref{lem:rainbow}, it follows that $\cH$ has a rainbow $q(|V(T^*)|,d)$-coloring
$\varphi_B^2$.
We define a coloring $\varphi$ of $G$ by  letting $\varphi(x)=\varphi_S(x)$ if $x\in V_S$ and  $\varphi(x)=(\varphi^1_B(x), \varphi_B^2(x))$ if $x\in V_B$.
Note that $\varphi$ uses $c(T)$ colors.

Since $\varphi_S$ and $\varphi^1_B$ are star colorings, each of $G[V_S]$ and $G[V_B]$ has no bicolored $P_4$ under $\varphi$.
Moreover, since the colors used by $\varphi$ on $V_S$ and $V_B$ are distinct, the only possible bicolored $P_4:v_1v_2v_3v_4$ under $\varphi$ is when either $v_1,v_3\in V_S$ and $v_2, v_4\in V_B$ or $v_1,v_3\in V_B$ and $v_2, v_4\in V_S$.
Yet, two vertices of $V_B$ with a common neighbor in $V_S$ receive distinct colors by $\varphi$ since they have different colors in $\varphi^2_B$.
This implies that there is no bicolored $P_4$ under $\varphi$, and hence $\varphi$ is a star $c(T)$-coloring of $G$.
This completes the proof.
\end{proof}

\section{Remarks and future research directions}\label{sec:remarks}

In this paper, we initiated the study of determining which classes of $F$-free graphs have bounded $H$-avoiding chromatic number for a given connected bipartite graph $H$.
 We succeed in finding such a characterization for $F$ when $H$ is in a large class of connected bipartite graphs.
For a general bipartite graph $H$, characterizing such graphs $F$ seems difficult to determine.
Yet, it would be interesting to complete the characterization.

\begin{prob}\label{ques-main}
Let $H$ be a connected bipartite graph.
Characterize all graphs $F$ such that the class of $F$-free graphs has bounded $H$-avoiding chromatic number.
\end{prob}

Note that this paper completely solved Problem~\ref{ques-main} when $H$ is a bipartite graph where vertices in one part has degree at most $2$.
An interesting unknown case of this problem is when $H=K_{3, 3}$, and we explicitly state this below.

\begin{prob}\label{ques-k33}
Characterize all graphs $F$ such that the class of $F$-free graphs has bounded $K_{3,3}$-avoiding chromatic number.
\end{prob}

We also ask the list version of $H$-avoiding coloring;
we omit the precise definition of the list $H$-avoiding chromatic number.
Our proof for the if part of Theorem~\ref{thm-main} does not extended easily to the list version, even when $H=P_4$.
It currently seems out of reach, but it would indeed be an intriguing result.

\begin{prob}
Let $H$ be a connected bipartite graph.
For which graphs $F$, does the class of $F$-free graphs has bounded list $H$-avoiding chromatic number?
\end{prob}

Note that the case when $H=P_4$ is of particular interest, as this would be a strengthening of Proposition~\ref{lem:free-bounded} from the star chromatic number to the list star chromatic number.

\begin{prob}
For every forest $F$ when each pair of $3^+$-vertices has even distance, does the class of $F$-free graphs have bounded list star chromatic number?
\end{prob}

\section*{Acknowledgements}
This work was done during the 2nd Korean Early Career Researcher Workshop in Combinatorics.
Ilkyoo Choi was supported by Basic Science Research Program through the National Research Foundation of Korea (NRF) funded by the Ministry of Education (NRF-2018R1D1A1B07043049), and also by Hankuk University of Foreign Studies Research Fund.
Ringi Kim was  supported by the National Research Foundation of Korea (NRF) grant funded by the Korea government (MSIP) (NRF-2018R1C1B6003786).
Boram Park was supported by Basic Science Research Program through the National Research Foundation of Korea (NRF) funded by the Ministry of  Science, ICT \& Future Planning (NRF-2018R1C1B6003577).

\bibliographystyle{plain}
\bibliography{bounded_H-avoiding_coloring}

\begin{thebibliography}{10}

\bibitem{1961Be}
C.~Berge.
\newblock F\"abung von graphen, deren s\"amtliche bzw. deren ungerade kreise
  starr sind.
\newblock {\em Wiss. Z. Martin-Luther-Univ. Halle-Wittenberg Math.-Natur.
  Reihe}, {\bf 10} (1961), 114.

\bibitem{2006ChRoSeTh}
M.~Chudnovsky, N.~Robertson, P.~Seymour, and R.~Thomas.
\newblock The strong perfect graph theorem.
\newblock {\em Ann. of Math. (2)}, 164(1):51--229, 2006.

\bibitem{2016ChScSe}
M.~Chudnovsky, A.~Scott, and P.~Seymour.
\newblock Induced subgraphs of graphs with large chromatic number. {II}.
  {T}hree steps towards {G}y\'arf\'as' conjectures.
\newblock {\em J. Combin. Theory Ser. B}, 118:109--128, 2016.

\bibitem{2017Ch}
M.~Chudnovsky, A.~Scott, and P.~Seymour.
\newblock Induced subgraphs of graphs with large chromatic number. xii. distant
  stars, 2017.

\bibitem{1959Er}
P.~Erd\H{o}s.
\newblock Graph theory and probability.
\newblock {\em Canad. J. Math.}, 11:34--38, 1959.

\bibitem{1973Gr}
B.~Gr\"unbaum.
\newblock Acyclic colorings of planar graphs.
\newblock {\em Israel J. Math.}, 14:390--408, 1973.

\bibitem{1975Gy}
A.~Gy\'arf\'as.
\newblock On {R}amsey covering-numbers.
\newblock pages 801--816. Colloq. Math. Soc. Jan\'os Bolyai, Vol. 10, 1975.

\bibitem{1985Gy}
A.~Gy\'arf\'as.
\newblock Problems from the world surrounding perfect graphs.
\newblock {\em Tanulm\'anyok---MTA Sz\'amit\'astech. Automat. Kutat\'o Int.
  Budapest}, (177):53, 1985.

\bibitem{1980Gy}
A.~Gy\'arf\'as, E.~Szemer\'edi, and Zs. Tuza.
\newblock Induced subtrees in graphs of large chromatic number.
\newblock {\em Discrete Math.}, 30(3):235--244, 1980.

\bibitem{1995JeTo}
T.~R. Jensen and B.~Toft.
\newblock {\em Graph coloring problems}.
\newblock Wiley-Interscience Series in Discrete Mathematics and Optimization.
  John Wiley \& Sons, Inc., New York, 1995.
\newblock A Wiley-Interscience Publication.

\bibitem{1994Ki}
H.~A. Kierstead and S.~G. Penrice.
\newblock Radius two trees specify {$\chi$}-bounded classes.
\newblock {\em J. Graph Theory}, 18(2):119--129, 1994.

\bibitem{2004Ki}
H.~A. Kierstead and Y.~Zhu.
\newblock Radius three trees in graphs with large chromatic number.
\newblock {\em SIAM J. Discrete Math.}, 17(4):571--581, 2004.

\bibitem{1930Ra}
F.~P. Ramsey.
\newblock On a problem for formal logic.
\newblock {\em Proceedings of the London Mathematical Society}, 30:264--286,
  1930.

\bibitem{2016ScSe}
A.~Scott and P.~Seymour.
\newblock Induced subgraphs of graphs with large chromatic number. {I}. {O}dd
  holes.
\newblock {\em J. Combin. Theory Ser. B}, 121:68--84, 2016.

\bibitem{1997Sc}
A.~D. Scott.
\newblock Induced trees in graphs of large chromatic number.
\newblock {\em J. Graph Theory}, 24(4):297--311, 1997.

\bibitem{1928Sp}
E.~Sperner.
\newblock Ein {S}atz \"uber {U}ntermengen einer endlichen {M}enge.
\newblock {\em Math. Z.}, 27(1):544--548, 1928.

\bibitem{1981Su}
D.~P. Sumner.
\newblock Subtrees of a graph and the chromatic number.
\newblock In {\em The theory and applications of graphs ({K}alamazoo, {M}ich.,
  1980)}, pages 557--576. Wiley, New York, 1981.

\end{thebibliography}

 \end{document}